\tikzstyle{CCC}=[shape=circle, draw, fill=black!10, align=center, font=\scriptsize]
\tikzstyle{CC}=[shape=circle, draw, align=center, font=\scriptsize]
\theoremstyle{plain}
\newtheorem{lemma}{Lemma}[section]
\newtheorem{prop}[lemma]{Proposition}
\newtheorem{coro}[lemma]{Corollary}
\theoremstyle{remark}
\newtheorem{rem}[lemma]{Remark}
\newtheorem*{notat}{Notation}
\theoremstyle{definition}
\newtheorem{definition}[lemma]{Definition}
\newtheorem{ex}[lemma]{Example}
\newcommand{\N}{\mathbb{N}}
\newcommand{\Cc}{\mathscr{C}}
\newcommand{\Uc}{\mathscr{U}}
\newcommand{\Dc}{\mathscr{D}}
\newcommand{\Gm}{\Gamma}
\newcommand{\op}{\textup{op}}
\newcommand{\mxl}{\textup{mxl}}
\newcommand{\mnl}{\textup{mnl}}
\newcommand{\st}[1]{\textnormal{st}(#1)}
\newcommand{\I}{\mathcal{I}}
\newcommand{\modu}[1]{\ \ \textnormal{mod}\ #1}
\begin{document}

\title{The crosscut poset and the fixed point property}

\author{Ana Gargantini}
\address{Facultad de Ciencias Exactas y Naturales \\ Universidad Nacional de Cuyo \\ Mendoza, Argentina.}
\email{anagargantini@gmail.com}

\author{Miguel Ottina}
\address{}
\email{miguelottina@gmail.com}

\subjclass[2010]{Primary: 06A06, 06A07. Secondary: 54H25, 55M20.}


\keywords{Crosscut Poset; Fixed point property; Fixed simplex property.}

\thanks{Research partially supported by grants M044 (2016--2018) and 06/M118 (2019--2021) of Universidad Nacional de Cuyo.}

\begin{abstract}
We introduce two novel ideas related to the crosscut poset and give many examples of application of these ideas to the fixed point property.
\end{abstract}

\maketitle

\section{Introduction}

The crosscut poset is a combinatorial invariant which is finer than the well-known crosscut complex. It was introduced by M. Ottina in \cite{ottina2022crosscut} and used to give generalizations of several important results such as Bj\"orner's crosscut Theorem.

In this article we show how an order-preserving map $f\colon P\to Q$ between posets induces an order-preserving map between certain crosscut posets associated to $P$ and $Q$. Moreover, an additional novel idea that we present in this article is that, given a poset $P$, the crosscut posets of $P$ with respect to the cutsets of maximal and minimal elements of $P$ can be combined to obtain a new poset, which we denote $\Cc(P)$, that has the additional property that any order-preserving map $f\colon P\to Q$ between posets induces an order-preserving map $\Cc(f)\colon \Cc(P)\to \Cc(Q)$, which restricts to order-preserving maps between the corresponding crosscut posets that make up $\Cc(P)$ and $\Cc(Q)$. This assigment, which is studied in the main section of this article, turns out to be very useful to study the fixed point property, as it is shown by several results that we present in that section and many examples that we study in the last section. In particular, we obtain an alternative proof to a result of \cite{hoft1976some} and we prove that if $P$ is a poset such that the crosscut poset of $P$ with respect to the maximal elements of $P$ has the fixed point property and every element of this crosscut poset has the fixed point property then the poset $P$ has the fixed point property, generalizing a result of \cite{baclawski1979fixed}.

\section{Preliminaries}

{\bf Irreducible points.} Let $P$ be a poset and let $x\in P$. We say that $x$ is an \emph{irreducible point} of $P$ if either $P_{< x}$ has a maximum element or $P_{>x}$ has a minimum element. Recall that if $x$ is an irreducible point of a poset $P$ then $P$ has the fixed point property if and only if $P-\{x\}$ has the fixed point property \cite[Scholium 4.13]{schroder2016ordered}.

We say that a finite poset $P$ is \emph{dismantlable by irreducibles} if there exists $n\in \N$ and a finite sequence $P_0,P_1,\ldots,P_n$ of subposets of $P$ such that $P_0=P$, the subposet $P_n$ has only one element and for each $j\in\{1,2,\ldots,n\}$, the subposet $P_j$ is obtained from $P_{j-1}$ by removing an irreducible point of $P_{j-1}$. It follows that if a finite poset is dismantlable by irreducibles then it has the fixed point property (\cite[Corollary 2]{rival1976fixed}). 

\begin{rem} \label{rem_crown}
Let $n\in\N$ such that $n\geq 2$ and let $P$ be a $2n$--crown, that is, a poset which is isomorphic to the poset defined by the following Hasse diagram
\begin{center}
\begin{tikzpicture}[y=1.5cm]
\tikzstyle{every node}=[font=\scriptsize]
\foreach \x in {1,2} \draw (\x,0) node(c\x)[inner sep=2pt]{$\bullet$} node[below=1]{$\x$}; 
\foreach \x in {3} \draw (\x,0) node(c\x)[inner sep=2pt]{$\cdots$};
\foreach \x in {4} \draw (\x,0) node(c\x)[inner sep=2pt]{$\bullet$} node[below=1]{${n-1}$}; 
\foreach \x in {5} \draw (\x,0) node(c\x)[inner sep=2pt]{$\bullet$} node[below=1]{$n$}; 

\foreach \x in {1,2} \draw (\x,1) node(b\x)[inner sep=2pt]{$\bullet$} node[above=1]{$n+\x$}; 
\foreach \x in {3} \draw (\x,1) node(b\x)[inner sep=2pt]{$\cdots$};
\foreach \x in {4} \draw (\x,1) node(b\x)[inner sep=2pt]{$\bullet$} node[above=1]{$2n-1$}; 
\foreach \x in {5} \draw (\x,1) node(b\x)[inner sep=2pt]{$\bullet$} node[above=1]{$2n$}; 

\foreach \x in {1,2} \draw (b1)--(c\x);
\foreach \x in {1} \draw (b2)--(c\x);
\draw[shorten >=1cm,shorten <=0cm] (b2)--(c3);
\draw[shorten >=1cm,shorten <=0cm] (c2)--(b3);
\draw[shorten >=1cm,shorten <=0cm] (b4)--(c3);
\draw[shorten >=1cm,shorten <=0cm] (c4)--(b3);
\foreach \x in {5} \draw (b4)--(c\x);
\foreach \x in {4,5} \draw (b5)--(c\x);
\end{tikzpicture}
\end{center}
Note that, for each element $a\in P$, the poset $P-\{a\}$ is dismantlable by irreducibles. It follows that every order-preserving map $f\colon P\to P$ which is not bijective has a fixed point.
\end{rem}

{\bf Crosscut poset.} We recall below the definition of the crosscut poset from \cite{ottina2022crosscut}. First, we introduce notation.

\begin{notat}
Let $S$ be a set. The power set of $S$ will be denoted by $\mathcal{P}(S)$ and the set $\mathcal{P}(S)-\{\varnothing \}$ will be denoted by $\mathcal{P}_{\neq\varnothing}(S)$. In addition, the set whose elements are the finite non-empty subsets of $S$ will be denoted by $\mathcal{P}_{f,\neq \varnothing}(S)$.
\end{notat}

\begin{definition}[\cite{ottina2022crosscut}]
Let $P$ be a poset and let $X$ be a subset of $P$. The \emph{crosscut poset of $P$ with respect to $X$} is the subposet of $(\mathcal{P}_{\neq\varnothing}(P),\subseteq)$ whose elements are the connected components of the non-empty subposets $\st{A}$ with $A\in \mathcal{P}_{\neq\varnothing}(X)$. It will be denoted by $\Gm(P,X)$.
\end{definition}

We also recall the following Proposition from \cite{ottina2022crosscut} that will be needed later.

\begin{prop}[\cite{ottina2022crosscut}] \label{prop_connected_subset}
Let $P$ be a poset and let $X$ be a subset of $P$. Let $B\subseteq P$ be a connected non-empty subposet and let $\Gamma_B=\{C\in \Gm(P,X) \mid B\subseteq C \}$. If $\Gamma_B$ is non-empty, then $\I_X(B)\neq \varnothing$ and $\Gamma_B$ has a minimum element, which is the connected component of $\st{\I_X(B)}$ that contains $B$.

In particular, if $B\in\Gm(P,X)$ then $\I_X(B)\neq \varnothing$ and $B$ is a connected component of $\st{\I_X(B)}$.
\end{prop}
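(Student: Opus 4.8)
The plan is to run everything off the Galois-connection relationship between the two operators $\st{(-)}$ and $\I_X(-)$. Recall from the definitions in \cite{ottina2022crosscut} that for $A\subseteq X$ and $B\subseteq P$ one has the adjunction $B\subseteq \st{A}\iff A\subseteq \I_X(B)$; in particular both operators are inclusion-reversing and $B\subseteq \st{\I_X(B)}$ always holds. These are the main structural inputs, and the only other tool I need is the elementary fact that a connected subset of a poset which meets a connected component of an ambient subposet is entirely contained in that component.

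First I would establish that $\I_X(B)\neq\varnothing$. Since $\Gamma_B\neq\varnothing$, pick $C\in\Gamma_B$; by the definition of the crosscut poset, $C$ is a connected component of $\st{A}$ for some non-empty $A\subseteq X$. Then $B\subseteq C\subseteq \st{A}$, so the adjunction gives $A\subseteq\I_X(B)$, and as $A\neq\varnothing$ we conclude $\I_X(B)\neq\varnothing$.

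Next I would exhibit the candidate minimum. From $B\subseteq\st{\I_X(B)}$ together with the connectedness of $B$, the set $B$ sits inside a unique connected component $C_0$ of $\st{\I_X(B)}$. Since $\I_X(B)$ is a non-empty subset of $X$ and $\st{\I_X(B)}\supseteq B\neq\varnothing$, the component $C_0$ is by definition an element of $\Gm(P,X)$, and $B\subseteq C_0$ places it in $\Gamma_B$. For minimality, take any $C\in\Gamma_B$, realized as a component of some $\st{A}$ with $B\subseteq C$; then $B\subseteq\st{A}$ yields $A\subseteq\I_X(B)$, whence $\st{\I_X(B)}\subseteq\st{A}$ by inclusion-reversal. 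Thus $C_0\subseteq\st{A}$ is a connected subset meeting $C$ (both contain the non-empty $B$), so $C_0\subseteq C$. This shows $C_0=\min\Gamma_B$, and by construction it is the component of $\st{\I_X(B)}$ containing $B$.

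Finally, the ``in particular'' clause follows by specialization: if $B\in\Gm(P,X)$ then $B$ is connected, non-empty, and $B\in\Gamma_B$, so $\Gamma_B\neq\varnothing$ and the first part applies. Since $C_0=\min\Gamma_B$ and $B\in\Gamma_B$ we get $C_0\subseteq B$, while $C_0\in\Gamma_B$ gives $B\subseteq C_0$; hence $B=C_0$ is a connected component of $\st{\I_X(B)}$. I expect the only delicate point to be bookkeeping around the Galois connection—keeping each inclusion pointed in the correct direction—and checking that $C_0$ genuinely qualifies as an element of $\Gm(P,X)$, which requires the non-emptiness of both $\I_X(B)$ and the relevant star; the connectivity step itself is routine.
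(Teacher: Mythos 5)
Your proof is correct. The paper itself does not reprove this proposition (it is quoted from \cite{ottina2022crosscut}), but your argument---the Galois connection $B\subseteq\st{A}\iff A\subseteq\I_X(B)$ for $A\subseteq X$, given by the symmetry of comparability, combined with the fact that a connected subset meeting a connected component must lie entirely inside it---is exactly the standard proof of this statement, with the minimum correctly identified as the connected component of $\st{\I_X(B)}$ containing $B$ and the ``in particular'' clause correctly obtained by specialization.
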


\section{New constructions and results}

In this section we will show that, when the considered cutsets are that of maximal (or minimal) elements of the given posets, any order-preserving map between posets induces an order-preserving map between the corresponding crosscut posets. Besides, we will illustrate how this assigment can be used to study the fixed point property.

\begin{definition}
Let $P$ be a poset. 
\begin{itemize}
\item We define $\Dc(P)$ as the poset $\Gm(P,\mxl(P))$.
\item We define $\Uc(P)$ as the poset $\Gm(P,\mnl(P))^{\op}$.
\item  We define $\Cc(P)$ as the union of the posets $\Dc(P)$ and $\Uc(P)$ with the additional relations $C_1\leq C_2$ for every $(C_1,C_2)\in \Uc(P) \times \Dc(P)$ such that $C_1\cap C_2 \neq \varnothing$.
\end{itemize}
\end{definition}

Let $P$ be a poset. In the previous section we observed that the elements of $\Dc(P)$ are down-sets of $P$. Dually, the elements of $\Uc(P)$ are up-sets of $P$. Also, it is not difficult to check that if $P$ is a poset then $\Dc(P^{\op}) =\left(\Uc(P) \right)^{\op}$ and $\Cc(P^{\op}) =\left(\Cc(P) \right)^{\op}$. Thus, the results of this section concerning the poset $\Dc(P)$ admit dual versions involving the poset $\Uc(P)$.

\begin{ex} \label{ex_easy}
In this simple example we consider the poset $P$ of Example \ref{ex_2_sect_3} and construct its associated poset $\Cc(P)$.
\medskip

\begin{center}
\begin{tikzpicture}[baseline=-3.55cm]
\tikzstyle{every node}=[font=\scriptsize]
\draw (0.5,-0.9) node(P){\normalsize $P$};
\foreach \x in {0,1} \draw (\x,0) node(\x){$\bullet$} node[below=1]{\x}; 
\foreach \x in {2,3,4} \draw (\x-2.5,1) node(\x){$\bullet$} node[above=1]{\x}; 
\foreach \x in {2,3,4} \draw (0)--(\x);
\foreach \x in {3,4} \draw (1)--(\x);
\end{tikzpicture}
\hspace*{2cm}
\begin{tikzpicture}[x=1.5cm, y=1.5cm]
\draw (1,-0.9) node(CP){\normalsize $\Cc(P)$};
\path node (U2) at (0,3) [CC] {2\\0}
node (U3) at (1,3) [CC] {3\\0 1}
node (U4) at (2,3) [CC] {4\\0 1}
 
node (U34a) at (0.5,2) [CC] {0} 
node (U34b) at (1.5,2) [CC] {1} 
 
node (F01a) at (0.5,1) [CC] {3}
node (F01b) at (1.5,1) [CC] {4}

node (F0) at (0.5,0) [CC] {2 3 4\\0}
node (F1) at (1.5,0) [CC] {3 4\\1};

\draw [-] (U34a) to (U3) to (U34b) to (U4) to (U34a) to (U2);
\draw [-] (F0) to (F01a) to (F1) to (F01b) to (F0);
\draw[-,bend left=30] (F0) to (U34a);
\draw[-,bend left=30] (F1) to (U34b);
\draw[-,bend right=20] (F01a) to (U3);
\draw[-,bend right=20] (F01b) to (U4);

\draw [decorate,decoration={brace,amplitude=5pt,mirror},xshift=-0.4cm]
(3,1.8) -- (3,3.4) node [black,midway,xshift=0.8cm] {\normalsize $\Dc(P)$};

\draw [decorate,decoration={brace,amplitude=5pt,mirror},xshift=-0.4cm]
(3,-0.5) -- (3,1.2) node [black,midway,xshift=0.8cm] {\normalsize $\Uc(P)$};

\end{tikzpicture}
\end{center}
\end{ex}

The following proposition shows that, for a connected poset $P$, the sets $\Dc(P)$ and $\Uc(P)$ of the previous definition are disjoint, except in a trivial case. It will be needed for Definition \ref{def_Df_Uf_Cf}.

\begin{prop}
Let $P$ be a connected poset. The following are equivalent:
\begin{enumerate}[(a)]
\item $\Dc(P)\cap \Uc(P)\neq\varnothing$.
\item $P$ has a maximum element and a minimum element.
\item $\Dc(P) = \Uc(P) = \{P\}$.
\end{enumerate}
\end{prop}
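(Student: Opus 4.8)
The plan is to establish the cycle of implications (c) $\Rightarrow$ (a) $\Rightarrow$ (b) $\Rightarrow$ (c), so that all three become equivalent. Two of these are essentially immediate. For (c) $\Rightarrow$ (a), note that $\{P\} = \Dc(P) = \Uc(P)$ already exhibits $P$ itself as a common element, so the intersection is non-empty. For (b) $\Rightarrow$ (c), if $P$ has a maximum $m$ and a minimum, then $\mxl(P) = \{m\}$, so the only non-empty subset $A$ of $\mxl(P)$ is $\{m\}$ and $\st{\{m\}} = P$; since $P$ is connected this subposet has a single connected component, namely $P$, whence $\Dc(P) = \{P\}$, and dually $\Uc(P) = \{P\}$.

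The substance of the proposition is the implication (a) $\Rightarrow$ (b), which I would prove in two steps. First I would show that any $C \in \Dc(P) \cap \Uc(P)$ must equal $P$. Indeed, as recalled just before this proposition, every element of $\Dc(P)$ is a down-set and every element of $\Uc(P)$ is an up-set, so such a $C$ is simultaneously a down-set and an up-set; being non-empty and closed under comparability in both directions, $C$ is a union of connected components of $P$, and since $P$ is connected this forces $C = P$. This closure step is a short zigzag argument along a path in the connected poset $P$.

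The second step is to deduce from $P \in \Dc(P)$ that $P$ has a maximum. Since $P$ is an element of $\Gm(P,\mxl(P))$, it is a connected component of $\st{A}$ for some non-empty $A \subseteq \mxl(P)$; as $\st{A} \subseteq P$, this component being all of $P$ forces $\st{A} = P$. Unwinding the definition of the star, and using that every element of $A$ is maximal, $\st{A} = P$ says that every $x \in P$ satisfies $x \le a$ for all $a \in A$; choosing any $a \in A$ then shows $a$ is a maximum of $P$. The dual argument applied to $P \in \Uc(P)$ produces a minimum, giving (b). I expect the main obstacle to be precisely this last translation from ``$P$ is a connected component of $\st{A}$'' to the existence of a maximum, where one must be careful that the relevant component is the whole poset and must read off the definition of $\st{A}$ correctly for a set $A$ of maximal elements; the down-set/up-set argument in the first step, by contrast, is routine once connectedness is invoked.
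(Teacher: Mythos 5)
Your proposal is correct and follows essentially the same route as the paper: the same cycle of implications, with the key step being that an element of $\Dc(P)\cap\Uc(P)$ is simultaneously a down-set and an up-set in a connected poset, hence all of $P$, and then unwinding $\st{A}=P$ for $A\subseteq\mxl(P)$ to produce a maximum (dually a minimum). The only cosmetic difference is that the paper first deduces $\#A=1$ before concluding $P=P_{\leq a}$, whereas you read off the maximum directly from comparability with maximal elements; both amount to the same observation.
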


\begin{proof}
We will prove first that $(a) \Rightarrow (b)$. Let $C\in\Dc(P)\cap \Uc(P)$. Then $C$ is a non-empty down-set and up-set of $P$. Since $P$ is connected, $C=P$. Since $C\in\Dc(P)$, there exists a non-empty subset $A\subseteq \mxl(P)$ such that $C$ is a connected component of $\st{A}$. Hence $\st{A}=P$ and thus $\#A=1$. Then, if $a$ is the unique element of $A$, we obtain that $P=P_{\leq a}$. Hence, $P$ has a maximum element. In a similar way, it follows that $P$ has a minimum element.

The implication $(b) \Rightarrow (c)$ follows immediately from the definition of $\Dc(P)$ and $\Uc(P)$, while the implication $(c)\Rightarrow (a)$ is trivial.
\end{proof}

Observe that the previous result does not hold if the poset $P$ is not connected. Indeed, if $P$ is the antichain poset on $\{0,1\}$, then $\Dc(P)=\Uc(P)=\{\{0\},\{1\}\}$.

\medskip

The following important definition relies heavily on Proposition \ref{prop_connected_subset}.

\begin{definition} \label{def_Df_Uf_Cf}
Let $P$ and $Q$ be posets and let $f\colon P\to Q$ be an order-preserving map. 
\begin{itemize}
\item Suppose that $\mxl(Q)$ is a cutset of $Q$. We define 
\begin{displaymath}
\Dc(f)\colon \Dc(P)\to \Dc(Q) \qquad \textnormal{by} \qquad \Dc(f)(C)= \min \{D\in\Dc(Q) \mid f(C)\subseteq D \}.
\end{displaymath}
\item Suppose that $\mnl(Q)$ is a cutset of $Q$. We define 
\begin{displaymath}
\Uc(f)\colon \Uc(P)\to \Uc(Q) \qquad \textnormal{by} \qquad \Uc(f)(C)= \max \{D\in\Uc(Q) \mid f(C)\subseteq D \}.
\end{displaymath}
\item Suppose that both $\mxl(Q)$ and $\mnl(Q)$ are cutsets of $Q$ and that $\Dc(P)\cap\Uc(P)=\varnothing$. We define 
\begin{displaymath}
\Cc(f)\colon \Cc(P)\to \Cc(Q) \qquad \textnormal{by} \qquad \Cc(f)(C)= 
\begin{cases}
\Dc(f)(C) & \textup{if } C\in\Dc(P), \\
\Uc(f)(C) & \textup{if } C\in\Uc(P).
\end{cases}
\end{displaymath}
\end{itemize}
\end{definition}

We will prove now that $\Dc(f)$ is well defined. Let $C\in\Dc(P)$. Then there exists $a\in \mxl(P)$ such that $C\subseteq P_{\leq a}$. Since $\mxl(Q)$ is a cutset of $Q$, there exists $b\in\mxl(Q)$ such that $f(a)\leq b$. Then $f(C)\subseteq Q_{\leq f(a)}\subseteq Q_{\leq b}$. Hence, $\{D\in\Dc(Q) \mid f(C)\subseteq D \}$ is non-empty and thus it has a minimum element by Proposition \ref{prop_connected_subset}. In a similar way it can be proved that $\Uc(f)$ is well defined and thus $\Cc(f)$ is also well defined.

It is not difficult to verify that the maps $\Dc(f)$, $\Uc(f)$ and $\Cc(f)$ are order-preserving.

\medskip

The following example shows that the assignment $f\mapsto \Dc(f)$ of the previous definition is not functorial, and that a similar situation occurs with $\Uc$ and $\Cc$.

\begin{ex}
Consider the poset $P$ and its associated poset $\Dc(P)$ which are given below.
\smallskip
\begin{center}
\begin{tikzpicture}[baseline=-1.25cm]
\tikzstyle{every node}=[font=\scriptsize]
\draw (0,0) node(0){$\bullet$} node[below=1]{0}; 
\draw (0,1) node (1){$\bullet$} node[left=1]{1};
\draw (1,1) node (2){$\bullet$} node[right=1]{2};
\foreach \x in {3,4} \draw (\x-3,2) node(\x){$\bullet$} node[above=1]{\x}; 
  
\draw (0)--(1);
\foreach \x in {4,3} \draw (1)--(\x);
\foreach \x in {4,3} \draw (2)--(\x);
\draw (0.5,-0.9) node(P){\normalsize $P$};
\end{tikzpicture}
\hspace*{2cm}
\begin{tikzpicture}[x=2.3cm, y=2cm]
\path node (U3) at (0,1) [CC] {3\\1 2\\0}
 node (U4) at (1,1) [CC] {4\\1 2\\0}
 node (U1) at (0,0) [CC] {1\\0}
 node (U2) at (1,0) [CC] {2};

\draw (U1)--(U4)--(U2)--(U3)--(U1);
\draw (0.5,-0.5) node(UP){$\Dc(P)$};
\end{tikzpicture}
\end{center}
Let $f\colon P\to P$ be the order-preserving map defined by
\begin{center}
\begin{tabular}{c | c c c c c}
$x$ & 0 & 1 & 2 & 3& 4\\[-2pt]
\hline
$f(x)$& 2 & 3 & 3 & 3& 3
\end{tabular}
\end{center}
Let $g\colon P\to P$ be the constant map with value 0. Then $fg$ is the constant map with value 2. It follows that, for all $C\in\Dc(P)$, $\Dc(g)(C)=\{0,1\}$ and $\Dc(fg)(C)=\{2\}$. Thus, for all $C\in\Dc(P)$, $\Dc(f)\Dc(g)(C)=\Dc(f)(\{0,1\})= P_{\leq 3}$. Hence, $\Dc(fg)\neq\Dc(f)\Dc(g)$. 

This implies that $\Cc(fg)\neq\Cc(f)\Cc(g)$, and the fact that $\Uc$ is not functorial can be deduced from this example considering opposite posets.
\end{ex}

\medskip 

We will now show how the induced maps of Definition \ref{def_Df_Uf_Cf} can be applied to give an alternative proof to the following result of H. H\"oft and M. H\"oft.

\begin{prop}[{\cite[Theorem 2]{hoft1976some}}]
Let $P$ be a poset such that $\mnl(P)$ is a finite cutset of $P$. Let $f\colon P\to P$ be an order-preserving map. Suppose that each non-empty subset of $\mnl(P)$ has a join. Then there exists $x\in P$ such that $f(x)\geq x$. 
\end{prop}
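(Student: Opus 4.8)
The plan is to apply the induced order-preserving map $\Uc(f)\colon \Uc(P)\to\Uc(P)$ of Definition \ref{def_Df_Uf_Cf}, which is available precisely because $\mnl(P)$ is a cutset of $P$, and then to show that a fixed point of $\Uc(f)$ yields the desired element $x$. First I would describe $\Uc(P)$ explicitly under the standing hypotheses. For a non-empty $A\subseteq \mnl(P)$, the subposet $\st{A}$ is the set of elements comparable to every element of $A$; since each element of $A$ is minimal, comparability here means lying above, so $\st{A}$ is the set of common upper bounds of $A$. As $A$ has a join, this set equals $P_{\geq \bigvee A}$, which has minimum $\bigvee A$ and is therefore connected. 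Hence every element of $\Uc(P)$ has the form $P_{\geq \bigvee A}$ with minimum $\bigvee A$, and since $\mnl(P)$ is finite there are only finitely many such sets; in particular $\Uc(P)$ is finite.

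Next I would exhibit a maximum element of $\Uc(P)$. Writing $\hat 1=\bigvee \mnl(P)$, which exists because $\mnl(P)$ is a non-empty subset of itself, the up-set $M=P_{\geq \hat 1}=\st{\mnl(P)}$ is an element of $\Gm(P,\mnl(P))$. For every non-empty $A\subseteq\mnl(P)$ we have $\bigvee A\leq \hat 1$, so $M\subseteq P_{\geq\bigvee A}$; thus $M$ is the $\subseteq$-minimum of $\Gm(P,\mnl(P))$ and hence the maximum of $\Uc(P)=\Gm(P,\mnl(P))^{\op}$.

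Then I would produce a fixed point of $\Uc(f)$ by iterating from the top. Since $M$ is the maximum, $\Uc(f)(M)\leq M$, and applying the order-preserving map $\Uc(f)$ repeatedly yields a non-increasing sequence $M\geq \Uc(f)(M)\geq \Uc(f)^2(M)\geq\cdots$ in the finite poset $\Uc(P)$. Such a sequence stabilizes, so there is $n\in\N$ with $C^*:=\Uc(f)^{n}(M)=\Uc(f)^{n+1}(M)$, that is, $\Uc(f)(C^*)=C^*$. Finally I would translate this back to $P$: by definition $\Uc(f)(C^*)=\max\{D\in\Uc(P)\mid f(C^*)\subseteq D\}$ is a member of that set, so $f(C^*)\subseteq \Uc(f)(C^*)=C^*$. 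Taking $x$ to be the minimum $\bigvee A$ of $C^*$, we have $x\in C^*$, whence $f(x)\in f(C^*)\subseteq C^*=P_{\geq x}$, i.e. $f(x)\geq x$.

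The step I expect to require the most care is the explicit identification of the elements of $\Uc(P)$ — verifying, via Proposition \ref{prop_connected_subset} together with the hypothesis that all joins of subsets of $\mnl(P)$ exist, that each $\st{A}$ is a single connected component equal to $P_{\geq\bigvee A}$ with a minimum element, and correctly unwinding the reverse-inclusion order of $\Uc(P)$ so that its maximum corresponds to the join of all minimal elements and the fixed-point condition reads $f(C^*)\subseteq C^*$.
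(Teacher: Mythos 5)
Your proposal is correct and follows essentially the same route as the paper: you identify $\Uc(P)$ as the finite poset of the sets $\st{A}=P_{\geq\bigvee A}$, observe it has maximum $\st{\mnl(P)}$, obtain a fixed point $C^*$ of $\Uc(f)$, and conclude $f(x)\geq x$ for $x=\min(C^*)$. The only (harmless) difference is that where the paper invokes the general fact that a finite poset with a maximum has the fixed point property, you reprove it inline by iterating $\Uc(f)$ from the top until the descending sequence stabilizes.
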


\begin{proof}
Note that for each non-empty subset $A$ of $\mnl(P)$ we have that $\st{A}$ has a minimum element since the subset $A$ has a join. In particular $\Uc(P)=\{\st{A} \mid A \in \mathcal{P}_{\neq\varnothing}(\mnl(P))\}$. It follows that the the poset $\Uc(P)$ has the fixed point property since it is finite and has a maximum element (which is $\st{\mnl(P)}$). Thus, the order-preserving map $\Uc(f)$ has a fixed point $C_0$. Hence, $f(C_0)\subseteq \Uc(f)(C_0)=C_0$. Since $C_0\in \Uc(P)$ there exists a non-empty subset $A$ of $\mnl(P)$ such that $C_0=\st{A}$. Let $a=\min(\st{A})$. Then, $f(a)\in f(C_0)\subseteq C_0 = \st{A}$, and hence $f(a)\geq a$.
\end{proof}

The following proposition shows a simple but interesting relationship between the induced maps of Definition \ref{def_Df_Uf_Cf} and the fixed point property.

\begin{prop} \label{prop_fpp}
Let $P$ be a poset and let $f\colon P\to P$ be an order-preserving map.
\begin{enumerate}[(a)]
\item Suppose that $\mxl(P)$ is a cutset of $P$. If $C\in \Dc(P)$ is a fixed point of the map $\Dc(f)$ and the subposet $C$ has the fixed point property then the map $f$ has a fixed point in $C$.
\item Suppose that both $\mxl(P)$ and $\mnl(P)$ are cutsets of $P$ and that $\Dc(P)\cap\Uc(P)=\varnothing$. If $C\in \Cc(P)$ is a fixed point of the map $\Cc(f)$ and the subposet $C$ has the fixed point property then the map $f$ has a fixed point in $C$.
\end{enumerate}
\end{prop}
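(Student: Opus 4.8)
The plan is to reduce both parts to a single observation: a fixed point $C$ of $\Dc(f)$ (or of $\Uc(f)$) is automatically \emph{invariant} under $f$, meaning $f(C)\subseteq C$. Once this is established, the fixed point property of the subposet $C$ finishes the argument immediately, since $f$ then restricts to an order-preserving self-map of $C$.

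For part (a) I would start from the hypothesis $\Dc(f)(C)=C$ and unwind the definition of the induced map: we have $\Dc(f)(C)=\min\{D\in\Dc(P)\mid f(C)\subseteq D\}$, so this minimum equals $C$. The key point is that the minimum of a set of down-sets is itself an element of that set, so $C$ belongs to $\{D\in\Dc(P)\mid f(C)\subseteq D\}$, which is precisely the assertion $f(C)\subseteq C$. Hence $f$ restricts to an order-preserving map $C\to C$, and because $C$ has the fixed point property this restriction has a fixed point, which is a fixed point of $f$ lying in $C$.

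For part (b) I would split according to whether $C\in\Dc(P)$ or $C\in\Uc(P)$; these are the only two possibilities and they are mutually exclusive since $\Dc(P)\cap\Uc(P)=\varnothing$. If $C\in\Dc(P)$, then $\Cc(f)(C)=\Dc(f)(C)$ by definition, and as $\Dc(f)(C)\in\Dc(P)$ the fixed point equation $\Cc(f)(C)=C$ forces $\Dc(f)(C)=C$ inside $\Dc(P)$, so part (a) applies verbatim. If instead $C\in\Uc(P)$, then $\Cc(f)(C)=\Uc(f)(C)$ and the same equation gives $\Uc(f)(C)=C$; here I would run the dual of the argument above—equivalently, apply part (a) to $P^{\op}$ via $\Dc(P^{\op})=(\Uc(P))^{\op}$. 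Concretely, $\Uc(f)(C)=\max\{D\in\Uc(P)\mid f(C)\subseteq D\}=C$ again places $C$ in the indexing set, yielding $f(C)\subseteq C$, and the fixed point property of $C$ produces the desired fixed point. I do not expect a genuine obstacle: the whole content lies in recognizing that the extremum defining $\Dc(f)$ and $\Uc(f)$ is attained within the set being minimized or maximized. The only point needing care is the bookkeeping in part (b), namely checking that $\Cc(f)(C)$ lands in the same piece ($\Dc(P)$ or $\Uc(P)$) as $C$, so that the fixed point equation can be read off correctly and the case distinction remains exhaustive.
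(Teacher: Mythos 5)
Your proof is correct and follows essentially the same route as the paper's: the paper's one-line argument $f(C)\subseteq \Dc(f)(C)=C$ is exactly your observation that the minimum (resp.\ maximum) defining $\Dc(f)$ (resp.\ $\Uc(f)$) is attained within the indexed set, so a fixed point of the induced map is an $f$-invariant subposet, and the fixed point property of $C$ finishes. Your explicit case analysis for part (b), using $\Dc(P)\cap\Uc(P)=\varnothing$ to read off which piece the fixed point equation lives in, is precisely what the paper leaves implicit in ``item (b) can be proved in a similar way.''
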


\begin{proof} 
Under the hypotheses of item $(a)$ we have that $f(C)\subseteq \Dc(f)(C)= C$ and since the subposet $C$ has the fixed point property we obtain that the map $f$ has a fixed point in $C$. Item ($b$) can be proved in a similar way.
\end{proof}

The following easy example shows that the previous result may not hold without the hypothesis that the subposet $C$ has the fixed point property.

\begin{ex} \label{ex_2}
Consider the poset $P$ and its associated poset $\Dc(P)$ which are given below.
\begin{center}
\begin{tikzpicture}
\tikzstyle{every node}=[font=\scriptsize]
\foreach \x in {0,1} \draw (\x,0) node(\x){$\bullet$} node[below=1]{\x}; 
\draw (0,1) node (2){$\bullet$} node[left=1]{2};
\draw (1,1) node (3){$\bullet$} node[right=1]{3};
\foreach \x in {4,5} \draw (\x-4,2) node(\x){$\bullet$} node[above=1]{\x}; 
  
\foreach \x in {2,3} \draw (0)--(\x);
\foreach \x in {2,3} \draw (1)--(\x);
\foreach \x in {4,5} \draw (2)--(\x);
\foreach \x in {4,5} \draw (3)--(\x);
\draw (0.5,-0.7) node(P){\normalsize $P$};
\end{tikzpicture}
\hspace*{2cm}
\begin{tikzpicture}[x=2.3cm, y=1.6cm]
\path node (U4) at (0,1) [CC] {4\\2 3\\0 1}
 node (U5) at (1,1) [CC] {5\\2 3\\ 0 1}
 node (U45) at (0.5,0) [CC] {2 3\\0 1};
 
\draw [-] (U45) to (U4); 
\draw [-] (U45) to (U5);
\draw (0.5,-0.6) node(UP){$\Dc(P)$};
\end{tikzpicture}
\end{center}
Note that the poset $\Dc(P)$ has the fixed point property but the poset $P$ does not have the fixed point property.
\end{ex}

The following result follows immediately from Proposition \ref{prop_fpp}. Its proof wil be omitted.

\begin{coro} \label{coro_fpp}
Let $P$ be a poset. 
\begin{enumerate}[(a)]
\item Suppose that $\mxl(P)$ is a cutset of $P$. If the poset $\Dc(P)$ has the fixed point property and every $C\in\Dc(P)$ has the fixed point property then $P$ has the fixed point property.
\item Suppose that both $\mxl(P)$ and $\mnl(P)$ are cutsets of $P$ and that $\Dc(P)\cap\Uc(P)=\varnothing$. If the poset $\Cc(P)$ has the fixed point property and every $C\in\Cc(P)$ has the fixed point property then $P$ has the fixed point property.
\end{enumerate}
\end{coro}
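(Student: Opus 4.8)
The plan is to deduce both items directly from Proposition \ref{prop_fpp} by quantifying over all order-preserving self-maps of $P$. For item (a), I would start by fixing an arbitrary order-preserving map $f\colon P\to P$, with the goal of producing a fixed point for it. Since $\mxl(P)$ is a cutset of $P$, the induced map $\Dc(f)\colon \Dc(P)\to\Dc(P)$ is well defined and order-preserving, by Definition \ref{def_Df_Uf_Cf} and the remarks immediately following it.

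Next I would invoke the first hypothesis, namely that $\Dc(P)$ has the fixed point property. Applied to the order-preserving map $\Dc(f)$, this yields an element $C\in\Dc(P)$ with $\Dc(f)(C)=C$. By the second hypothesis, this particular $C$, regarded as a subposet of $P$, has the fixed point property.

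Finally I would apply Proposition \ref{prop_fpp}(a) to the fixed point $C$ of $\Dc(f)$: since $C$ has the fixed point property, the proposition guarantees that $f$ has a fixed point in $C$, and hence in $P$. As $f$ was arbitrary, $P$ has the fixed point property. Item (b) follows by the same three-step argument, replacing $\Dc$ with $\Cc$ and invoking Proposition \ref{prop_fpp}(b); the additional hypotheses that both $\mxl(P)$ and $\mnl(P)$ are cutsets of $P$ and that $\Dc(P)\cap\Uc(P)=\varnothing$ are precisely what is required for $\Cc(f)$ to be defined.

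There is no genuine obstacle here, since all the content lies in Proposition \ref{prop_fpp}: the corollary is just the observation that a fixed point of $\Dc(f)$ (respectively $\Cc(f)$) which itself has the fixed point property forces $f$ to have a fixed point. The only point deserving a moment's care is to confirm that the induced self-map $\Dc(f)$ (respectively $\Cc(f)$) is actually available as an order-preserving map, so that the fixed point property of $\Dc(P)$ (respectively $\Cc(P)$) may be applied to it; this is ensured by the cutset hypotheses carried over from Definition \ref{def_Df_Uf_Cf}.
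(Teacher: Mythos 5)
Your proof is correct and is precisely the argument the paper has in mind: the paper omits the proof, stating only that the corollary ``follows immediately from Proposition \ref{prop_fpp}'', and your three steps (form $\Dc(f)$ or $\Cc(f)$ via the cutset hypotheses, extract a fixed point $C$ of the induced map from the fixed point property of $\Dc(P)$ or $\Cc(P)$, then apply Proposition \ref{prop_fpp} to $C$) are exactly that deduction. Your remark on verifying that the induced map is available as an order-preserving self-map is the right point of care and matches the role of the cutset hypotheses in Definition \ref{def_Df_Uf_Cf}.
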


Corollary \ref{coro_fpp} generalizes \cite[Corollary 5.3]{baclawski1979fixed} (which is itself a generalization of \cite[Theorem 2]{duffus1980retracts}). Indeed, under the assumptions of \cite[Corollary 5.3]{baclawski1979fixed}, for every non-empty subset $A\subseteq \mxl(P^\op)$ the subset $\st{A}$ is non-empty and connected (since it has the fixed point property) and thus $\Dc(P^{\op})=\{\st{A} \mid A \in \mathcal{P}_{\neq\varnothing}(\mxl(P^\op))\}$. In particular, the poset $\Dc(P^{\op})$ is finite and has a minimum element (which is $\st{\mxl(P^\op)}$), and thus, it has the fixed point property. It is worth mentioning that in Corollary \ref{coro_fpp} we require neither that the subsets $\st{A}$ are connected for every non-empty subset $A\subseteq \mxl(P^\op)$ nor that the subset of maximal elements of $P$ is finite.

The following is a kind of converse of Proposition \ref{prop_fpp}.

\begin{prop} \label{prop_fpp_converse}
Let $P$ be a poset such that $\mxl(P)$ is a finite cutset of $P$. Let $f\colon P\to P$ be an order-preserving map. If $f$ has a fixed point, then the map $\Dc(f)$ has a fixed point.
\end{prop}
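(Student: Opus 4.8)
The plan is to start from a fixed point $x_0=f(x_0)$ of $f$ and to iterate $\Dc(f)$, producing an increasing chain in $\Dc(P)$ whose eventual stabilization provides a fixed point of $\Dc(f)$. The point to watch is that $\Dc(P)$ may well be infinite, so the stabilization cannot be read off from $\Dc(P)$ directly; it must instead be extracted from the finiteness of $\mxl(P)$, and this is the step I expect to be the crux.

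First I would locate a canonical starting element. Since $\mxl(P)$ is a cutset, $x_0$ lies below some $a_0\in\mxl(P)$, so $x_0\in\st{\{a_0\}}$ and the connected component of $\st{\{a_0\}}$ containing $x_0$ is an element of $\Dc(P)$. Hence, taking $B=\{x_0\}$, the set $\Gamma_B=\{C\in\Dc(P)\mid x_0\in C\}$ is non-empty, and by Proposition \ref{prop_connected_subset} it has a minimum element, which I call $C_0$.

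Next I would build the chain by setting $C_{i+1}=\Dc(f)(C_i)$. For the base case $C_0\subseteq C_1$, note that $x_0\in C_0$ together with $f(x_0)=x_0$ gives $x_0\in f(C_0)\subseteq\Dc(f)(C_0)=C_1$; thus $C_1\in\Gamma_B$ and minimality of $C_0$ forces $C_0\subseteq C_1$. Since $\Dc(f)$ is order-preserving, an immediate induction yields $C_i\subseteq C_{i+1}$ for every $i$, i.e.\ an increasing chain $C_0\subseteq C_1\subseteq C_2\subseteq\cdots$ in $\Dc(P)$.

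Finally I would force stabilization through $\mxl(P)$. By the last assertion of Proposition \ref{prop_connected_subset}, each $C_i$ satisfies $\I_{\mxl(P)}(C_i)\neq\varnothing$ and is a connected component of $\st{\I_{\mxl(P)}(C_i)}$. As $\mxl(P)$ is finite, the map $i\mapsto\I_{\mxl(P)}(C_i)$ takes values in the finite set $\mathcal{P}(\mxl(P))$, so there exist indices $i<j$ with $\I_{\mxl(P)}(C_i)=\I_{\mxl(P)}(C_j)$. Then $C_i$ and $C_j$ are both connected components of one and the same subposet $\st{\I_{\mxl(P)}(C_i)}$; since $C_i\subseteq C_j$ these components intersect, and connected components are either equal or disjoint, so $C_i=C_j$. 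The increasing chain between indices $i$ and $j$ therefore collapses, whence $\Dc(f)(C_i)=C_{i+1}=C_i$ and $C_i$ is the sought fixed point of $\Dc(f)$. The decisive move is exactly this last transfer: replacing a stabilization argument in the possibly infinite poset $\Dc(P)$ by one in the finite power set of $\mxl(P)$, which is what makes the ``equal or disjoint'' property of connected components usable.
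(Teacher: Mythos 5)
Your proof is correct and follows essentially the same route as the paper: both arguments locate the minimum element $C_0$ of $\{C\in\Dc(P)\mid x_0\in C\}$ via Proposition \ref{prop_connected_subset}, deduce $C_0\subseteq\Dc(f)(C_0)$ from minimality, and then exploit the finiteness of $\mxl(P)$ through the key fact that nested elements of $\Dc(P)$ with equal $\I_{\mxl(P)}$-sets must coincide. The only divergence is at the final step, where the paper establishes that $\Dc(P)$ has no infinite ascending chains and cites the Abian--Brown theorem, while you iterate $\Dc(f)$ and stabilize the resulting chain by pigeonhole on $\mathcal{P}(\mxl(P))$ --- in effect a self-contained reproof of exactly the case of Abian--Brown that is needed.
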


\begin{proof}
We will prove first that the poset $\Dc(P)$ does not have infinite ascending chains. Let $C_1,C_2\in\Dc(P)$ such that $C_1< C_2$. Then $C_1\subsetneq C_2$ and thus $\I_{\mxl(P)}(C_1)\supseteq \I_{\mxl(P)}(C_2)$. If $\I_{\mxl(P)}(C_1)= \I_{\mxl(P)}(C_2)$ then, by Proposition \ref{prop_connected_subset}, both $C_1$ and $C_2$ are connected components of $\st{\I_{\mxl(P)}(C_1)}$, and since $C_1\subseteq C_2$ we obtain that $C_1=C_2$, which entails a contradiction. Thus, $\I_{\mxl(P)}(C_1)\supsetneq \I_{\mxl(P)}(C_2)$. Since $\mxl(P)$ is finite we obtain that $\Dc(P)$ does not have infinite ascending chains.

Let $x\in P$ be such that $f(x) = x$. Since $\mxl(P)$ is a cutset of $P$, there exists $a\in\mxl(X)$ such that $x\in P_{\leq a}$. By Proposition \ref{prop_connected_subset}, the set $\{C\in\Dc(P) \mid x\in C\}$ has a minimum element $C_0$. It follows that $x = f(x)\in f(C_0)\subseteq \Dc(f)(C_0)$. Thus, $C_0\subseteq \Dc(f)(C_0)$. Hence, $\Dc(f)$ has a fixed point by the Abian-Brown theorem (\cite[Theorem 2]{abian1961theorem}).
\end{proof}

\section{Examples}

Finally, we will give several examples in which we apply Proposition \ref{prop_fpp} (or Corollary \ref{coro_fpp}) to prove that certain posets have the fixed point property. These examples will show how Proposition \ref{prop_fpp} can be combined with other arguments and tools to prove that a given poset has the fixed point property. We will need the following lemma.

\begin{lemma}
\label{lemma_examples}
 Let $P_1$ and $P_2$ be the posets given by the following Hasse diagrams.
\begin{center}
 \begin{tikzpicture}
 \tikzstyle{every node}=[font=\scriptsize]
  
  \foreach \x in {0,1,2} \draw (\x,0) node(\x){$\bullet$} node[below=1]{\x}; 
  \foreach \x in {3,4,5} \draw (\x-3,1) node(\x){$\bullet$} node[right=1]{\x}; 
  \foreach \x in {6,7,8} \draw (\x-6,2) node(\x){$\bullet$} node[above=1]{\x};
  \draw(1,-0.9) node {\normalsize $P_1$};
  
  \foreach \x in {3,4} \draw (0)--(\x);
  \foreach \x in {3,5} \draw (1)--(\x);
  \foreach \x in {4,5} \draw (2)--(\x);
 
  \foreach \x in {6,7} \draw (3)--(\x);
  \foreach \x in {6,8} \draw (4)--(\x);
  \foreach \x in {7,8} \draw (5)--(\x);
 \end{tikzpicture}
\qquad \qquad
 \begin{tikzpicture}
 \tikzstyle{every node}=[font=\scriptsize]
  \foreach \x in {0,1,2} \draw (\x,0) node(\x){$\bullet$} node[below=1]{\x}; 
  \foreach \x in {3,4,5} \draw (\x-3,1) node(\x){$\bullet$} node[right=1]{\x}; 
  \foreach \x in {6,7,8} \draw (\x-6,2) node(\x){$\bullet$} node[above=1]{\x}; 
  \draw(1,-0.9) node {\normalsize $P_2$};
  
   \foreach \x in {3,4,5} \draw (0)--(\x);
   \foreach \x in {3,5} \draw (1)--(\x);
   \foreach \x in {3,4,5} \draw (2)--(\x);
 
   \foreach \x in {6,7} \draw (3)--(\x);
   \foreach \x in {6,8} \draw (4)--(\x);
   \foreach \x in {7,8} \draw (5)--(\x);
 \end{tikzpicture}
\end{center}
Let $P$ be either $P_1$ or $P_2$. If $f\colon P\to P$ is an order-preserving map without fixed points, then $f(\{3,4,5\})=\{3,4,5\}$.
\end{lemma}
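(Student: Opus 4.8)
The plan is to prove the two halves (i) $f(\{3,4,5\})\subseteq\{3,4,5\}$ and (ii) $f|_{\{3,4,5\}}$ is onto, which together give the claim. The workhorse will be the following elementary observation about the ``upper crown'': in both $P_1$ and $P_2$ the middle--maximal incidences coincide, the middle and maximal elements forming a six-crown (cf. Remark~\ref{rem_crown}), and each pair of middle elements has a \emph{unique} common upper bound, which is maximal --- namely $6,7,8$ are the common upper bounds of $\{3,4\}$, $\{3,5\}$, $\{4,5\}$ respectively. Consequently, if $f$ \emph{swaps} a pair of middle elements $\{a,b\}$, i.e. $f(a)=b$ and $f(b)=a$, then writing $t$ for their common upper bound we get $f(t)\ge f(a)=b$ and $f(t)\ge f(b)=a$, so $f(t)$ is a common upper bound of $a$ and $b$, whence $f(t)=t$ is a fixed point --- a contradiction. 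I will call this fact $(\star)$.

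Granting (i) for the moment, step (ii) is quick. Since $f$ has no fixed point and $f$ maps the antichain $\{3,4,5\}$ into itself, $f|_{\{3,4,5\}}$ is fixed-point-free; a fixed-point-free self-map of a three-element set that is not onto must have image of size $2$ (size $1$ would fix the common value), and fixed-point-freeness then forces it to swap the two elements of its image, contradicting $(\star)$. Hence $f|_{\{3,4,5\}}$ is onto.

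The substance is step (i): no middle element maps to a maximal or to a minimal element. For the maximal case, suppose $f(m)$ is maximal for some $m\in\{3,4,5\}$. The two maximal elements above $m$ must both map to $f(m)$ (their images are $\ge f(m)$ and $f(m)$ is maximal), so $f(m)$ cannot be one of them (else it is fixed); thus $f(m)$ is the unique maximal element $t^{*}$ not above $m$, which in the crown lies above the other two middle elements $m',m''$. Now $m'$ and $m''$ are forced below $t^{*}$ (each lies below a maximal element that maps to $t^{*}$) and both differ from $t^{*}$; a short analysis of the possible values of $f(m'),f(m'')$ finishes the case. If both land in $\{m',m''\}$ they must swap, and $(\star)$ applied to the pair $\{m',m''\}$, whose common upper bound is exactly $t^{*}$, yields a fixed point; the remaining possibilities, in which $f(m')$ or $f(m'')$ is a minimal element, are excluded by tracking the forced images of the minimal elements below $m',m''$ and finding either a fixed minimal element or a violation of order-preservation. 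The minimal case is then handled dually for $P_1$, which is self-dual via the anti-automorphism fixing the middle level and exchanging $0\leftrightarrow 8$, $1\leftrightarrow 7$, $2\leftrightarrow 6$, $3\leftrightarrow 5$, and directly for $P_2$: there the middle elements $3$ and $5$ lie above all three minimal elements, so $f(3)$ and $f(5)$ cannot be minimal, while the only delicate subcase $f(4)=1$ is dispatched by combining these facts with the already-established maximal case (forcing $f(3),f(5)\in\{3,4,5\}$) and then with $(\star)$.

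I expect the main obstacle to be the finite case analysis in the maximal case of step (i), specifically excluding the subcases where one of the remaining middle elements maps to a minimal element: there $(\star)$ does not apply directly, and one must instead chase the forced values on the lower levels, where $P_1$ and $P_2$ genuinely differ (their minimal--middle incidences being a six-crown and a non-crown, respectively) and so require separate, though routine, verification.
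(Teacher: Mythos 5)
Your proof is correct (I checked that the subcases you leave as ``routine'' do close in the way you claim), but it takes a genuinely different route from the paper's. The paper never analyses middle-to-maximal images at all: it first shows $f(\{3,4,5\})\cap\mnl(P)=\varnothing$ by a uniform argument --- if $f(x)$ were minimal for a middle element $x$, then two distinct minimal elements $a,b$ below $x$ satisfy $f(a)=f(b)=f(x)$, every non-minimal element lies above $a$ or $b$, hence $f(P_{>f(x)})\subseteq P_{\geq f(x)}$, fixed-point-freeness pushes this into $P_{>f(x)}$, and $P_{>f(x)}$ is dismantlable by irreducibles, giving a fixed point --- and then observes that consequently $f$ maps the upper six-crown $P-\mnl(P)$ into itself, so by Remark~\ref{rem_crown} this restriction is bijective, hence an automorphism of the crown, which forces $f(\{3,4,5\})=\{3,4,5\}$ in one stroke. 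You instead prove the two inclusions (into and onto the middle level) directly, with your observation $(\star)$ (each pair of middle elements has a unique common upper bound, which is maximal) as the engine, plus duality for $P_1$ and explicit case-chasing. What the paper's route buys is brevity and uniformity: dismantlability and the crown remark do all the combinatorial work, there is essentially no case analysis, and one even gets the stronger conclusion that $f$ is bijective on all of $P-\mnl(P)$. What your route buys is elementarity: you use neither the fixed-point property of dismantlable posets nor Remark~\ref{rem_crown}, only order-preservation and uniqueness of common upper bounds, together with the self-duality of $P_1$ (a feature the paper does not exploit). The cost is that the subcases where a middle element would map to a minimal one inside your ``maximal case'' must each be chased by hand (e.g.\ in $P_1$, from $f(3)=8$ and $f(4)=1$ one gets $f(0)=f(2)=1$, then $f(5)=1$ is forced, and then $f(1)=1$ is a fixed point), and these verifications, while routine as you say, are exactly where the paper's uniform argument saves work.
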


\begin{proof}
We will prove first that $f(\{3,4,5\})\subseteq P-\mnl(P)$. Let $x\in \{3,4,5\}$ and suppose that $f(x)\in \mnl(P)$. Then $f(P_{\leq x})=\{f(x)\}$ and hence there exist $a,b\in \mnl(P)$ such that $a\neq b$ and $f(a)=f(b)=f(x)$. Note that $f(x)\nleq x$ since $f$ does not have fixed points and thus if $P=P_2$ then $f(x)=1$ and $x=4$. Since $a\neq b$ we obtain that $P-\mnl(P)\subseteq P_{\geq a} \cup P_{\geq b}$ and thus $f(P_{>f(x)})\subseteq f(P_{\geq a} \cup P_{\geq b}) \subseteq P_{\geq f(x)}$. If there exists $z\in P_{>f(x)}$ such that $f(z)=f(x)$ then $f(z)<z$ and hence $f$ has a fixed point, contradicting the hypothesis on $f$. Thus, $f(P_{>f(x)})\subseteq P_{>f(x)}$ and since $P_{>f(x)}$ is dismantlable by irreducibles we obtain that $f$ has a fixed point, which entails a contradiction. Therefore, $f(\{3,4,5\})\subseteq P-\mnl(P)$. 

It follows that $f(P-\mnl(P))\subseteq P-\mnl(P)$. And since $f$ does not have fixed points, $f$ is bijective in this subposet by Remark \ref{rem_crown}. In particular, $f(\{3,4,5\})=\{3,4,5\}$.
\end{proof}

In the following examples we will prove that the posets $P^{3323}$, $P^{353}_1$ and $P^{353}_2$ of \cite[Fig.1]{schroder1993fpp11} have the fixed point property. Their Hasse diagrams are given below.

\begin{center}
\begin{tikzpicture}
\tikzstyle{every node}=[font=\scriptsize]
\foreach \x in {0,1,2} \draw (\x,0) node(\x){$\bullet$} node[below=1]{\x}; 
\foreach \x in {3,4,5} \draw (\x-3,1) node(\x){$\bullet$} node[right=1]{\x}; 
\foreach \x in {6,7} \draw (\x-5.5,2) node(\x){$\bullet$} node[right=1]{\x}; 
\foreach \x in {8,9,10} \draw (\x-8,3) node(\x){$\bullet$} node[above=1]{\x}; 
  
\foreach \x in {3,4} \draw (0)--(\x);
\foreach \x in {3,5} \draw (1)--(\x);
\foreach \x in {4,5} \draw (2)--(\x);
\foreach \x in {8,10} \draw (3)--(\x);
\foreach \x in {6,7} \draw (4)--(\x);
\foreach \x in {6,7} \draw (5)--(\x);
\foreach \x in {8,9} \draw (6)--(\x);
\foreach \x in {9,10} \draw (7)--(\x);

\draw (1,-1) node(L){\normalsize $P^{3323}$};
\end{tikzpicture}
\qquad
\begin{tikzpicture}
\tikzstyle{every node}=[font=\scriptsize]
\foreach \x in {0,1,2} \draw (\x,-0.2) node(\x){$\bullet$} node[below=1]{\x}; 
\foreach \x in {3,4,5,6,7} \draw (\x-4,1) node(\x){$\bullet$} node[right=1]{\x}; 
\foreach \x in {8,9,10} \draw (\x-8,2.2) node(\x){$\bullet$} node[above=1]{\x}; 
  
\foreach \x in {3,4,5} \draw (0)--(\x);
\foreach \x in {3,4,6,7} \draw (1)--(\x);
\foreach \x in {5,6,7} \draw (2)--(\x);

\foreach \x in {3,4,6,7} \draw (8)--(\x);
\foreach \x in {3,5,7} \draw (9)--(\x);
\foreach \x in {4,5,6} \draw (10)--(\x);

\draw (1,-1) node(L){\normalsize $P^{353}_1$};
\end{tikzpicture}
\qquad
\begin{tikzpicture}
\tikzstyle{every node}=[font=\scriptsize]
\foreach \x in {0,1,2} \draw (\x,-0.2) node(\x){$\bullet$} node[below=1]{\x}; 
\foreach \x in {3,4,5,6,7} \draw (\x-4,1) node(\x){$\bullet$} node[right=1]{\x}; 
\foreach \x in {8,9,10} \draw (\x-8,2.2) node(\x){$\bullet$} node[above=1]{\x}; 
  
\foreach \x in {3,4,5} \draw (0)--(\x);
\foreach \x in {3,4,6,7} \draw (1)--(\x);
\foreach \x in {5,6,7} \draw (2)--(\x);

\foreach \x in {3,4,6} \draw (8)--(\x);
\foreach \x in {3,5,6,7} \draw (9)--(\x);
\foreach \x in {4,5,7} \draw (10)--(\x);

\draw (1,-1) node(L){\normalsize $P^{353}_2$};
\end{tikzpicture}
\end{center}

\begin{ex}
\label{ex_1_fpp}
In this example we will prove that the poset $P^{3323}$ has the fixed point property. The Hasse diagrams of $\Dc(P^{3323})$ and $\Uc(P^{3323})$ are shown below.  The shaded elements of these diagrams are subposets of $P^{3323}$ which have the fixed point property (since they are dismantlable by irreducibles).

\smallskip

\begin{center}
\begin{tikzpicture}[x=2.3cm, y=2cm]
\path node[CCC] (U8) at (0,6) { 8\\ 6\\ 3 4 5\\ 0 1 2}
node[CCC] (U9) at (1,6) { 9\\ 6 7\\ 4 5\\ 0 1 2}
node (U10) at (2,6) [CCC] { 10\\ 7\\ 3 4 5\\ 0 1 2}

node (U89) at (0,5) [CCC] { 6\\ 4 5\\ 0 1 2} 
node (U810) at (1,5) [CC] { 3 4 5\\ 0 1 2} 
node (U910) at (2,5) [CCC] { 7\\ 4 5\\ 0 1 2}

node (U8910) at (1,4) [CCC] { 4 5\\ 0 1 2};

\draw [-] (U8) to (U810) to (U10) to (U910) to (U9) to (U89) to (U8);
\draw [-] (U8910) to (U89);
\draw [-] (U8910) to (U810);
\draw [-] (U8910) to (U910);

\draw (1,3.4) node(L){\normalsize $\Dc(P^{3323})$};
\end{tikzpicture}
\qquad
\begin{tikzpicture}[x=2.3cm, y=2cm]
\path node (F012) at (1,3) [CCC] { 8 9 10\\ 6 7}

node (F01) at (0,2) [CC] { 8 9 10\\ 3 6 7} 
node (F02) at (1,2) [CCC] { 8 9 10\\ 6 7\\ 4} 
node (F12) at (2,2) [CCC] { 8 9 10\\ 6 7\\ 5}

node (F0) at (0,1) [CCC] { 8 9 10\\ 6 7\\ 3 4\\ 0} 
node (F1) at (1,1) [CCC] { 8 9 10\\ 6 7\\ 3 5\\ 1} 
node (F2) at (2,1) [CCC] { 8 9 10\\ 6 7\\ 4 5\\ 2};

\draw[-] (F012) to (F01);
\draw[-] (F012) to (F02);
\draw[-] (F012) to (F12);
\draw[-] (F0) to (F02) to (F2) to (F12) to (F1) to (F01) to (F0);

\draw (1,0.25) node(L){\normalsize $\Uc(P^{3323})$};
\end{tikzpicture}
\end{center}

Let $f\colon P^{3323}\to P^{3323}$ be an order-preserving map. Let $C_1=\{0,1,2,3,4,5\}\in \Dc(P^{3323})$ and let $C_2=\{3,6,7,8,9,10\}\in \Uc(P^{3323})$. Note that the finite posets $\Dc(P^{3323})$ and $\Uc(P^{3323})$ have the fixed point property.  Let $D_1 \in \Dc(P^{3323})$ and $D_2\in \Uc(P^{3323})$ be fixed points of the maps $\Dc(f)$ and $\Uc(f)$ respectively. If $D_1\neq C_1$ then $D_1$ has the fixed point property and hence, applying Proposition \ref{prop_fpp}, we obtain that $f$ has a fixed point in $D_1$. Analogously, if $D_2\neq C_2$ then $f$ has a fixed point in $D_2$. Thus, we may assume that $D_1=C_1$ and $D_2=C_2$. Hence, $f(C_1)\subseteq \Dc(f)(C_1)=C_1$ and $f(C_2)\subseteq \Uc(f)(C_2)=C_2$. Then, $f(C_1\cap C_2)\subseteq C_1\cap C_2$ and hence $f(3)=3$. Therefore, the poset $P^{3323}$ has the fixed point property.
\end{ex}

\begin{ex}
In this example we will prove that the poset $P^{353}_1$ has the fixed point property. The Hasse diagram of $\Cc(P^{353}_1)$ is shown in Figure \ref{fig_ex2}.

\begin{figure}
\begin{subfigure}[c]{0.48\textwidth}
\centering
\begin{tikzpicture}[x=2.3cm, y=2cm]
\path node[CCC] (U8) at (0,6)  { 8\\ 3 4 6 7\\ 0 1 2}
node (U9) at (1,6) [CCC] { 9\\ 3 5 7\\ 0 1 2}
node (U10) at (2,6) [CCC] { 10\\ 4 5 6\\ 0 1 2}

node (U89) at (0,5) [CCC] { 3 7\\ 0 1 2} 
node (U810) at (1,5) [CCC] { 4 6\\ 0 1 2} 
node (U910a) at (2,5) [CCC] { 5\\ 0 2}

node (U8910a) at (0.2,4) [CCC] { 0}
node (U8910b) at (1,4) [CCC] { 1}
node (U8910c) at (1.8,4) [CCC] { 2};

\draw [-] (U8) to (U810) to (U10) to (U910a) to (U9) to (U89) to (U8);

\foreach \x in {U89, U810, U910a} \draw (U8910a)--(\x);
\foreach \x in {U89, U810} \draw (U8910b)--(\x);
\foreach \x in {U89, U910a, U810} \draw (U8910c)--(\x);

\path node (F012a) at (0.2,3) [CCC] { 8}
node (F012b) at (1,3) [CCC] { 9}
node (F012c) at (1.8,3) [CCC] { 10}

node (F01) at (0,2) [CCC] { 8 9 10\\ 3 4} 
node (F02a) at (1,2) [CCC] { 9 10\\ 5} 
node (F12) at (2,2) [CCC] { 8 9 10\\ 6 7}

node (F0) at (0,1) [CCC] { 8 9 10\\ 3 4 5\\ 0} 
node (F1) at (1,1) [CCC] { 8 9 10\\ 3 4 6 7\\ 1} 
node (F2) at (2,1) [CCC] { 8 9 10\\ 5 6 7\\ 2};

\foreach \x in {F01, F12} \draw (F012a)--(\x);
\foreach \x in {F01, F02a, F12} \draw (F012b)--(\x);
\foreach \x in {F01, F02a, F12} \draw (F012c)--(\x);
\draw[-] (F0) to (F02a) to (F2) to (F12) to (F1) to (F01) to (F0);

\draw[-, red] (U810) to (F01) to (U89) to (F12) to (U810);
\draw[-, red] (F02a) to (U910a);
\draw[-, blue, bend left] (U8) to (F012a);
\draw[-, blue, bend left] (U9) to (F012b);
\draw[-, blue, bend left] (U10) to (F012c);

\draw[-, blue, bend left] (F0) to (U8910a);
\draw[-, blue, bend left] (F1) to (U8910b);
\draw[-, blue, bend left] (F2) to (U8910c);
\end{tikzpicture}
\caption{$\Cc(P^{353}_1)$}
\label{fig_ex2}
\end{subfigure}
\begin{subfigure}[c]{0.48\textwidth}
\centering
\begin{tikzpicture}[x=2.3cm, y=2cm]
\path node (U8) at (0,6) [CCC] { 8\\ 3 4 6\\ 0 1 2}
node (U9) at (1,6) [CCC] { 9 \\ 3 5 6 7\\ 0 1 2}
node (U10) at (2,6) [CCC] { 10\\ 4 5 7\\ 0 1 2}

node (U89) at (0,5) [CCC, label=left:$A$] { 3 6\\ 0 1 2} 
node (U810a) at (1,5) [CCC, label=left:$B$] { 4\\ 0 1} 
node (U910) at (2,5) [CCC, label=left:$C$] { 5 7\\ 0 1 2}

node (U8910a) at (0.2,4) [CCC] { 0}
node (U8910b) at (1,4) [CCC] { 1}
node (U8910c) at (1.8,4) [CCC] { 2};

\draw [-] (U8) to (U810a) to (U10) to (U910) to (U9) to (U89) to (U8);

\foreach \x in {U89, U810a, U910} \draw (U8910a)--(\x);
\foreach \x in {U89, U810a, U910} \draw (U8910b)--(\x);
\foreach \x in {U89, U910} \draw (U8910c)--(\x);

\path node (F012a) at (0.2,3) [CCC] {8}
node (F012b) at (1,3) [CCC] {9}
node (F012c) at (1.8,3) [CCC] {10}

node (F01) at (0,2) [shape=circle, draw, fill=black!10, label=right:$A'$, align=center, font=\scriptsize] {8 9 10\\3 4} 
node (F02a) at (1,2) [shape=circle, draw, fill=black!10, label=right:$B'$, align=center, font=\scriptsize] {9 10\\5} 
node (F12) at (2,2) [shape=circle, draw, fill=black!10, label=right:$C'$,  align=center, font=\scriptsize] {8 9 10\\6 7}

node (F0) at (0,1) [CCC] {8 9 10\\3 4 5\\0} 
node (F1) at (1,1) [CCC] {8 9 10\\3 4 6 7\\1} 
node (F2) at (2,1) [CCC] {8 9 10\\5 6 7\\2};

\foreach \x in {F01, F12} \draw (F012a)--(\x);
\foreach \x in {F01, F02a, F12} \draw (F012b)--(\x);
\foreach \x in {F01, F02a, F12} \draw (F012c)--(\x);
\draw[-] (F0) to (F02a) to (F2) to (F12) to (F1) to (F01) to (F0);

\draw[-, red] (U810a) to (F01) to (U89) to (F12) to (U910) to (F02a);
\draw[-, blue, bend left] (U8) to (F012a);
\draw[-, blue, bend left] (U9) to (F012b);
\draw[-, blue, bend left] (U10) to (F012c);

\draw[-, blue, bend left] (F0) to (U8910a);
\draw[-, blue, bend left] (F1) to (U8910b);
\draw[-, blue, bend left] (F2) to (U8910c);
\end{tikzpicture}
\caption{$\Cc(P^{353}_2)$}
\label{fig_ex3}
\end{subfigure}
\caption{}
\end{figure}

Note that the minimal elements of $\Dc(P^{353}_1)$ and the maximal elements of $\Uc(P^{353}_1)$ are irreducible points of $\Cc(P^{353}_1)$. The poset that is obtained by removing those irreducible points is the poset $Q_1$ given below. Now observe that the element labeled $a$ is an irreducible point of $Q_1$, and by removing it we obtain the poset $Q_2$ given below, which is isomorphic to $P^{3323}$.

\begin{center}
\begin{tikzpicture}
 \tikzstyle{every node}=[font=\scriptsize]
  \foreach \x in {0,1,2} \draw (\x,0) node(\x){$\bullet$}; 
  \foreach \x in {3,4,5} \draw (\x-3,1) node(\x){$\bullet$}; 

  \foreach \x in {9,10} \draw (\x-9,3) node(\x){$\bullet$};
  \draw (2,3) node(11){$\bullet$} node[right=1]{{\normalsize $a$}};
  \foreach \x in {12,13,14} \draw (\x-12,4) node(\x){$\bullet$}; 

  \foreach \x in {3,4} \draw (0)--(\x);
  \foreach \x in {3,5} \draw (1)--(\x);
  \foreach \x in {4,5} \draw (2)--(\x);

  \foreach \x in {12,13} \draw (9)--(\x);
  \foreach \x in {12,14} \draw (10)--(\x);
  \foreach \x in {13,14} \draw (11)--(\x);

  \draw[-, red] (3) to (9) to (5) to (10) to (3);
  \draw[-, red] (4) to (11);
  
  \draw (1,-1) node(L){\normalsize $Q_1$};
\end{tikzpicture}
\hspace*{2cm}
\begin{tikzpicture}
  \tikzstyle{every node}=[font=\scriptsize]
  \foreach \x in {0,1,2} \draw (\x,0) node(\x){$\bullet$}; 
  \foreach \x in {3,4,5} \draw (\x-3,1) node(\x){$\bullet$}; 

  \foreach \x in {9,10} \draw (\x-9,3) node(\x){$\bullet$};
  \foreach \x in {12,13,14} \draw (\x-12,4) node(\x){$\bullet$}; 

  \foreach \x in {3,4} \draw (0)--(\x);
  \foreach \x in {3,5} \draw (1)--(\x);
  \foreach \x in {4,5} \draw (2)--(\x);

  \foreach \x in {12,13} \draw (9)--(\x);
  \foreach \x in {12,14} \draw (10)--(\x);

  \draw[-, red] (3) to (9) to (5) to (10) to (3);
  \draw[-,bend right] (4) to (13);
  \draw[-,bend right] (4) to (14);
  
  \draw (1,-1) node(L){\normalsize $Q_2$};
\end{tikzpicture}
\end{center}

Since the poset $P^{3323}$ has the fixed point property (by Example \ref{ex_1_fpp}) we obtain that the poset $\Cc(P^{353}_1)$ has the fixed point property. And since all the elements of $\Cc(P^{353}_1)$ have the fixed point property (because all of them are dismantlable by irreducibles), we deduce from Corollary \ref{coro_fpp} that $P^{353}_1$ has the fixed point property.
\end{ex}

\begin{ex}
In this example we will prove that the poset $P^{353}_2$ has the fixed point property. The Hasse diagram of $\Cc(P^{353}_2)$ is shown in Figure \ref{fig_ex3}. Observe that all the elements of $\Cc(P^{353}_2)$ are dismantlable by irreducibles and thus they all have the fixed point property. Let $A$, $B$, $C$, $A'$, $B'$ and $C'$ be the sets indicated in Figure \ref{fig_ex3}.

Let $f\colon P^{353}_2\to P^{353}_2$ be an order-preserving map. If the map $\Dc(f)$ has a fixed point applying Proposition \ref{prop_fpp} we obtain that $f$ has a fixed point. Thus, we may assume that $\Dc(f)$ does not have fixed points. Hence, by Lemma \ref{lemma_examples}, $\Dc(f)(\{A,B,C\})\subseteq \{A,B,C\}$. In a similar way, we may assume that $\Uc(f)$ does not have fixed points and thus $\Uc(f)(\{A',B',C'\})\subseteq \{A',B',C'\}$ by Lemma \ref{lemma_examples}. Let $M = \{A,B,C,A',B',C'\}\subseteq \Cc(P^{353}_2)$. Since $M$ is dismantlable by irreducibles and $\Cc(f)(M)\subseteq M$, the map $\Cc(f)$ has a fixed point. Thus, applying Proposition \ref{prop_fpp}, we obtain that $f$ has a fixed point. Therefore, the poset $P^{353}_2$ has the fixed point property.
\end{ex}

\begin{ex}
Let $n,k\in\N$ such that $n\geq 4$ and $2\leq k \leq n-1$. Let $P^{n,k}$ be the poset whose Hasse diagram is
\begin{center}
\begin{tikzpicture}[y=1.5cm]
\tikzstyle{every node}=[font=\scriptsize]
\foreach \x in {1,2} \draw (\x,0) node(c\x)[inner sep=2pt]{$\bullet$} node[below=1]{$c_\x$}; 
\foreach \x in {3,5} \draw (\x,0) node(c\x)[inner sep=2pt]{$\cdots$};
\foreach \x in {4} \draw (\x,0) node(c\x)[inner sep=2pt]{$\bullet$} node[below=1]{$c_k$}; 
\foreach \x in {6} \draw (\x,0) node(c\x)[inner sep=2pt]{$\bullet$} node[below=1]{$c_{n-2}$}; 
\foreach \x in {7} \draw (\x,0) node(c\x)[inner sep=2pt]{$\bullet$} node[below=1]{$c_{n-1}$}; 
\foreach \x in {8} \draw (\x,0) node(c\x)[inner sep=2pt]{$\bullet$} node[below=1]{$c_n$}; 

\foreach \x in {1,2} \draw (\x,1) node(b\x)[inner sep=2pt]{$\bullet$} node[right=1]{$b_\x$}; 
\foreach \x in {3,5} \draw (\x,1) node(b\x)[inner sep=2pt]{$\cdots$};
\foreach \x in {4} \draw (\x,1) node(b\x)[inner sep=2pt]{$\bullet$} node[right=1]{$b_k$}; 
\foreach \x in {6} \draw (\x,1) node(b\x)[inner sep=2pt]{$\bullet$} node[right=1]{$b_{n-2}$}; 
\foreach \x in {7} \draw (\x,1) node(b\x)[inner sep=2pt]{$\bullet$} node[right=1]{$b_{n-1}$}; 
\foreach \x in {8} \draw (\x,1) node(b\x)[inner sep=2pt]{$\bullet$} node[right=1]{$b_n$}; 

\foreach \x in {1,2,3} \draw (1.75*\x+1,2) node(a\x)[inner sep=2pt]{$\bullet$} node[above=1]{$a_\x$}; 
  
\foreach \x in {1,2,4,6,7} \draw (a1)--(b\x);
\foreach \x in {1,2,4,6,8} \draw (a2)--(b\x);
\foreach \x in {4,6,7,8} \draw (a3)--(b\x);
\foreach \x in {1,2} \draw (b1)--(c\x);
\foreach \x in {1} \draw (b2)--(c\x);
\draw[shorten >=1cm,shorten <=0cm] (b2)--(c3);
\draw[shorten >=1cm,shorten <=0cm] (c2)--(b3);
\foreach \x in {3,5} \draw[shorten >=1cm,shorten <=0cm] (b4)--(c\x);
\foreach \x in {3,5} \draw[shorten >=1cm,shorten <=0cm] (c4)--(b\x);
\draw[shorten >=1cm,shorten <=0cm] (b6)--(c5);
\draw[shorten >=1cm,shorten <=0cm] (c6)--(b5);
\foreach \x in {7} \draw (b6)--(c\x);
\foreach \x in {6,8} \draw (b7)--(c\x);
\foreach \x in {7,8} \draw (b8)--(c\x);
\end{tikzpicture}
\end{center}
Note that 
\begin{align*}
P^{n,k}_{\leq a_1}&=\{a_1\}\cup \{b_j\mid 1\leq j\leq n-1\}\cup \{c_j\mid 1\leq j\leq n\},\\
P^{n,k}_{\leq a_2}&=\{a_2\}\cup \{b_j\mid 1\leq j\leq n \land j\neq n-1 \}\cup \{c_j\mid 1\leq j\leq n\},\\
P^{n,k}_{\leq a_3}&=\{a_3\}\cup \{b_j\mid k\leq j\leq n\}\cup \{c_j\mid k-1\leq j\leq n\},\\
P^{n,k}_{\leq b_1}&=\{b_1,c_1,c_2\},\\
P^{n,k}_{\leq b_j}&=\{b_j,c_{j-1},c_{j+1}\}, \textup{ for $2\leq j\leq n-1$},\\ 
P^{n,k}_{\leq b_n}&=\{b_n,c_{n-1},c_n\},\\
P^{n,k}_{\leq c_j}&=\{c_j\}, \textup{ for $1\leq j\leq n$}.
\end{align*}

Observe that the posets $P^{4,3}$ and $P^{4,2}$ are isomorphic to the posets $P^{443}_4$ and $P^{443}_5$ of \cite[Fig.1]{schroder1993fpp11}, respectively.

We will prove that the posets $P^{n,k}$ have the fixed point property for all $n,k$ as above. To this end we will compute $\Dc(P^{n,k})$. Let
\begin{align*}
A &=\{b_j\mid 1\leq j \leq n-2\} \cup \{c_j\mid 1\leq j \leq n-1\}, \\
B &=\{b_j \mid k\leq j\leq n-1 \ \land\ j\not\equiv n \modu{2} \} \cup \{c_j \mid k-1 \leq j \leq n \ \land\ j\equiv n \modu{2} \}, \\
C &=\{b_j \mid k\leq j\leq n \ \land\ j\equiv n \modu{2} \} \cup \{c_j \mid k-1\leq j\leq n-1 \ \land\ j\not\equiv n \modu{2} \}\cup\{c_n\}, \\
D &=\{b_j \mid k\leq j\leq n-3 \ \land\ j\not\equiv n \modu{2} \} \cup \{c_j \mid k-1\leq j \leq n-2 \ \land\ j\equiv n \modu{2} \}, \\
E &=\{b_j \mid k\leq j\leq n-2 \ \land\ j\equiv n \modu{2} \} \cup \{c_j \mid k-1\leq j\leq n-1 \ \land\ j\not\equiv n \modu{2} \}, \\
F &= \{c_n\}, \textnormal{ and } \\
S &= P^{n,k}-\mxl(P^{n,k}).
\end{align*}
It is not difficult to verify that the connected components of $\st{\{a_1,a_2\}}$ are $A$ and $F$, the connected components of $\st{\{a_1,a_3\}}$ are $B$ and $E$, the connected components of $\st{\{a_2,a_3\}}$ are $C$ and $D$ and that the connected components of $\st{\{a_1,a_2,a_3\}}$ are $D$, $E$ and $F$. Hence $\Dc(P^{n,k})$ is the finite poset given by the following Hasse diagram 
\begin{center}
\begin{tikzpicture}[x=1cm, y=1.2cm]
\tikzstyle{every node}=[font=\scriptsize]
\draw (0,2) node (Ua1){$\bullet$} node[above=1]{$P^{n,k}_{\leq a_1}$};
\draw (1,2) node (Ua2){$\bullet$} node[above=1]{$P^{n,k}_{\leq a_2}$};
\draw (2,2) node (Ua3){$\bullet$} node[above=1]{$P^{n,k}_{\leq a_3}$};
 
\draw (0,1) node (A){$\bullet$} node[right=1]{$A$};
\draw (1,1) node (B){$\bullet$} node[right=1]{$B$};
\draw (2,1) node (C){$\bullet$} node[right=1]{$C$};

\draw (0,0) node (D){$\bullet$} node[below=1]{$D$};
\draw (1,0) node (E){$\bullet$} node[below=1]{$E$};
\draw (2,0) node (F){$\bullet$} node[below=1]{$F$};
 
\draw [-] (Ua1) to (B) to (Ua3) to (C) to (Ua2) to (A) to (Ua1);
\draw [-] (A) to (E) to (C) to (F) to (B) to (D) to (A);
\end{tikzpicture}
\end{center}

Let $f\colon P^{n,k}\to P^{n,k}$ be an order-preserving map. Observe that all the elements of $\Dc(P^{n,k})$ have the fixed point property since they are dismantlable by irreducibles. Thus, if the map $\Dc(f)$ has a fixed point, applying Proposition \ref{prop_fpp} we obtain that $f$ has a fixed point. Hence, we may assume that $\Dc(f)$ does not have fixed points. Thus, by Lemma \ref{lemma_examples}, $\Dc(f)(\{A,B,C\})= \{A,B,C\}$. Since $S=A\cup B\cup C$, we have that 
\begin{displaymath}
f(S)= f(A)\cup f(B)\cup f(C) \subseteq \Dc(f)(A)\cup \Dc(f)(B)\cup \Dc(f)(C) = A\cup B\cup C = S.
\end{displaymath}
We will prove now that the restriction $f|\colon S\to S$ is not bijective. Note that 
\begin{align*}
\#A = 2n-3 > n-k +2 \geq \#B,
\end{align*}
where the first inequality holds since $n+k >5$. Thus, the sets $A$, $B$ and $C$ do not all have the same cardinality and since $\Dc(f)$ does not have fixed points and $\Dc(f)(\{A,B,C\})= \{A,B,C\}$ it follows that there exists $T\in \{A,B,C\}$ such that $\# \Dc(f)(T)< \# T$. Hence $\# f(T)< \# T$ and then $f|$ is not bijective. Thus, applying Remark \ref{rem_crown} we obtain that the map $f$ has a fixed point in $S$. Therefore, the poset $P^{n,k}$ has the fixed point property.
\end{ex}

\bibliographystyle{acm}
\bibliography{ref_crosscut}

\end{document}